\documentclass[review]{elsarticle}


\usepackage{xcolor}

\usepackage[a4paper, left=3.5cm, right=3.5cm, top=2.5cm, bottom=2.5cm]{geometry} 

\usepackage{booktabs} 

\usepackage{enumerate}

\usepackage[utf8]{inputenc}
\usepackage{amssymb,amsmath,amsfonts,accents,amsthm,thmtools}
\usepackage{mathtools}
\usepackage[colorlinks=true,citecolor=blue,linkcolor=blue,urlcolor=blue]{hyperref}

\usepackage[capitalise]{cleveref}
\crefname{thm}{Theorem}{Theorems}
\usepackage{bm}

\usepackage{times}
\usepackage[english]{babel}

\makeatletter
\def\bbl@set@language#1{%
	\edef\languagename{%
		\ifnum\escapechar=\expandafter`\string#1\@empty
		\else\string#1\@empty\fi}%
	\@ifundefined{babel@language@alias@\languagename}{}{%
		\edef\languagename{\@nameuse{babel@language@alias@\languagename}}%
	}%
	\select@language{\languagename}%
	\expandafter\ifx\csname date\languagename\endcsname\relax\else
	\if@filesw
	\protected@write\@auxout{}{\string\select@language{\languagename}}%
	\bbl@for\bbl@tempa\BabelContentsFiles{%
		\addtocontents{\bbl@tempa}{\xstring\select@language{\languagename}}}%
	\bbl@usehooks{write}{}%
	\fi
	\fi}
\newcommand{\DeclareLanguageAlias}[2]{%
	\global\@namedef{babel@language@alias@#1}{#2}%
}
\makeatother

\DeclareLanguageAlias{en}{english}
\DeclareLanguageAlias{English}{english}
\DeclareLanguageAlias{Englisch}{english}
\DeclareLanguageAlias{EN}{english}
\DeclareLanguageAlias{en-US}{english}


\usepackage{graphicx}


\makeatletter
\def\th@plain{%
	\thm@notefont{}
	\itshape 
}
\def\th@definition{%
	\thm@notefont{}
	\normalfont 
}
\makeatother

\newtheorem{theorem}{Theorem}

\theoremstyle{plain}

\delimitershortfall=-1pt


\newcommand{\ham}{H}

\newcommand{\evecU}{\bm{\Phi}}
\newcommand{\evecUnPadded}{\bm{\phi}}

\newcommand{\sbar}{\overline{S}}

\newcommand{\ISR}[2]{R_{#1}(#2,\lambda)}
\newcommand{\sbb}{\sbar{}\sbar}
\newcommand{\ssb}{S\sbar}
\newcommand{\sbs}{\sbar{}S}
\newcommand{\symm}{T}
\newcommand{\symmEval}{t}
\newcommand{\unitVec}[1]{\bm{e}_{#1}}

\bibliographystyle{elsarticle-num}
\begin{document}
\begin{frontmatter}
\title{On symmetries of a matrix and its isospectral reduction}

\author[1]{M. Röntgen\corref{cor1}%
	\fnref{fn1}}
\ead{mroentge@physnet.uni-hamburg.de}

\author[1]{M. Pyzh\fnref{fn1}}%

\author[1]{C. V. Morfonios}%

\author[1,2]{P. Schmelcher}
\address[1]{%
	Zentrum für Optische Quantentechnologien, Fachbereich Physik, Universität Hamburg, Luruper Chaussee 149, 22761 Hamburg, Germany
}%
\address[2]{%
	The Hamburg Centre for Ultrafast Imaging, Universität Hamburg, Luruper Chaussee 149, 22761 Hamburg, Germany
}%
\cortext[cor1]{Corresponding author}
\fntext[fn1]{These two authors contributed equally.}
\begin{abstract}
The analysis of diagonalizable matrices in terms of their so-called isospectral reduction represents a versatile approach to the underlying eigenvalue problem.
Starting from a symmetry of the isospectral reduction, we show in the present work that it is possible to construct a corresponding symmetry of the original matrix.
\end{abstract}

\begin{keyword}
	Isospectral reduction \sep Latent symmetries \sep Cospectrality \sep Eigenvalues
	\MSC[2010] 05C50\sep 15A18 \sep 15A27
\end{keyword}
\end{frontmatter}

\section{Introduction}
\label{sec:intro}

The study of matrix eigenvalue problems of the form $\ham \mathbf{x} = \lambda \mathbf{x}$ is ubiquitous in science and technology.
A promising direction in analysing this eigenvalue problem is a dimensional reduction of $\ham$.
In this work, we consider the so-called \emph{isospectral reduction} (ISR) \cite{Bunimovich2014IsospectralTransformationsNewApproach}, which is defined via matrix partitioning of $\ham$ as
\begin{equation} \label{eq:ISR}
	\ISR{S}{\ham} = \ham_{SS} + \ham_{\ssb} \left(\lambda - \ham_{\sbb} \right)^{-1} \ham_{\sbs}\,,
\end{equation}
where the set $S \subseteq \{1,\ldots{},N\}$ and its complement $\sbar$ are used for partitioning $\ham \in \mathbb{C}^{N\times N}$.
For example, $\ham_{\ssb}$ denotes the submatrix obtained from $\ham$ by taking the rows in $S$ and the columns in $\sbar$.
The ISR provides valuable insights in quantum physics, where it is referred to as an effective Hamiltonian obtained from subsystem partitioning \cite{Grosso2013SolidStatePhysics,Rontgen2021PRL126180601LatentSymmetryInducedDegeneracies}.

As its name suggests, the ISR preserves the spectral properties of $\ham$:
Defining the multiset $\sigma(M)$ as the eigenvalue spectrum of a matrix $M$, it has been shown that the non-linear eigenvalue spectrum of $\ISR{S}{\ham}$ fulfills $\sigma(\ISR{S}{\ham}) = \sigma(\ham) - \sigma(\ham_{\sbb})$ \cite{Bunimovich2014IsospectralTransformationsNewApproach}.
Thus, whenever $\ham$ and $\ham_{\sbb}$ share no eigenvalues, $\ISR{S}{\ham}$ preserves the eigenvalue spectrum of $\ham$.
Building on this favourable property, the ISR has been applied \cite{Bunimovich2012LAIA4371429IsospectralGraphReductionsImproved} to improve the eigenvalue approximations of Gershgorin, Brauer, and Brualdi \cite{Gerschgorin1931IANSSM7749UberAbgrenzungEigenwerteMatrix,Brauer1947DMJ1421LimitsCharacteristicRootsMatrix,Brualdi1982LMA11143MatricesEigenvaluesDirectedGraphs}, to study pseudo-spectra of graphs and matrices\cite{VasquezFernandoGuevara2014NLAA22145PseudospectraIsospectrallyReducedMatrices}, to create stability preserving transformations of networks \cite{Bunimovich2011N25211IsospectralGraphTransformationsSpectral,Bunimovich2013N262131RestrictionsStabilityTimedelayedDynamical,Reber2020N332660IntrinsicStabilityStabilityDynamical}, to study the survival probabilities in open dynamical systems \cite{Bunimovich2014ETODCS119ImprovedEstimatesSurvivalProbabilities}, and very recently also to explain spectral degeneracies of physical systems \cite{Rontgen2021PRL126180601LatentSymmetryInducedDegeneracies}.

In this work, we concentrate on symmetries of the isospectral reduction, which we define as a normal and invertible matrix $\symm$ which commutes with $\ISR{S}{\ham}$, that is, $\left[ \ISR{S}{\ham}, \symm\right] = 0$, for all $\lambda \notin \sigma(\ham_{\sbb})$.
We note that, for the special case of permutations, such symmetries of $\ISR{S}{\ham}$ have been coined ``latent'' or ``hidden'' symmetries of $\ham$ \cite{Smith2019PA514855HiddenSymmetriesRealTheoretical}.
In that context, $\ham$ is the (weighted) adjacency matrix of a graph, and its automorphisms are described by a permutation matrix commuting with $\ham$.
The term ``latent'' then refers to the fact that the ISR of $\ham$ may feature non-trivial permutation symmetries, while $\ham$ features only a trivial (namely: the identity operation) permutation symmetry.
Interestingly, latent symmetries have been recently connected \cite{Kempton2020LAIA594226CharacterizingCospectralVerticesIsospectral} to the theory of so-called ``cospectral vertices'' which find applications in quantum computing \cite{Godsil2012PRL109050502NumberTheoreticNatureCommunicationQuantum,Kay2018AQPerfectStateTransferGraph,Rontgen2020PRA101042304DesigningPrettyGoodState}.
In the remainder of this work, we will adapt the generalized notion of Ref. \cite{Rontgen2021PRL126180601LatentSymmetryInducedDegeneracies}, and denote also non-permutation symmetries of $\ISR{S}{\ham}$ as ``latent symmetries of $\ham$''.

The nonlinear eigenvalues of $\ISR{S}{\ham}$ correspond to eigenvectors fulfilling $\ISR{S}{\ham} \mathbf{y} = \lambda \mathbf{y}$, given by the projection $\mathbf{y} = \mathbf{x}_{S}$ of the eigenvector $\bm{x}$ of $\ham$ to $S$ \cite{Duarte2015LAIA474110EigenvectorsIsospectralGraphTransformations}.
This allows to derive the profound impact of latent symmetries on the eigenvectors of $\ham$: Whenever the symmetry $\symm$ has only simple eigenvalues, and additionally $\ham$ and $\ham_{\sbb}$ share no eigenvalues, then all eigenvectors of $\ham$ fulfill
\begin{equation} \label{eq:localSymmetry}
\symm \mathbf{x}_{S} = \symmEval \mathbf{x}_{S} \, ,
\end{equation}
with $\symmEval$ being an eigenvalue of $\symm$.
However, when $\ham$ and $\ham_{\sbb}$ share eigenvalues, the behavior of the corresponding eigenvectors is still an open issue for general $\symm$.
Recently, an interesting first step in solving this problem has been made for the special case of $\symm = P = \begin{psmallmatrix}
	0 & 1 \\
	1 & 0
\end{psmallmatrix}$ \cite{Kempton2020LAIA594226CharacterizingCospectralVerticesIsospectral,Godsil2017A1StronglyCospectralVertices}:
It has been shown that $\left[\ISR{S}{\ham}, P  \right] = 0$ corresponds to the existence of an orthogonal block-diagonal matrix $Q = P \oplus \overline{Q}$ fulfilling $\left[Q,\ham \right] = 0$.
Thus, $\ham$ and $Q$ can be simultaneously diagonalized, and---assuming no degeneracies of $\ham$---it follows that \emph{all} eigenvectors either fulfill \cref{eq:localSymmetry} or vanish on $S$, that is, $\mathbf{x}_{S} = 0$.
We generalize this result in the following.
\begin{theorem}
	Let $\ISR{S}{\ham}$ be the isospectral reduction of a self-adjoint matrix
	\begin{equation} \label{eq:HPart}
	\ham = \begin{pmatrix}
	\ham_{SS} &  \ham_{\ssb} \\
	\ham_{\sbs} & \ham_{\sbb}
	\end{pmatrix}
	\end{equation}
	and $\symm$ be a normal and invertible $|S|\times |S|$ matrix.
	Then the following are equivalent
	\begin{enumerate}[(i)]
		\item $\left[\mathcal{R}_{S}(\ham,\lambda),\symm \right] = 0 \;\forall\; \lambda \notin \sigma(\ham_{\sbb})$.
		\item $\left[ \left(\ham^{k} \right)_{SS},\symm \right] = 0 \;\forall\; k \in \mathbb{N}$.
		\item There exists a normal matrix $Q = \symm \oplus \overline{Q}$ fulfilling $\left[Q,\ham \right] = 0$.
	\end{enumerate}
\end{theorem}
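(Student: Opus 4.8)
I would prove the equivalences through the short cycle: (i) $\Leftrightarrow$ (ii) by a power-series comparison, (iii) $\Rightarrow$ (ii) by a one-line block computation, and (ii) $\Rightarrow$ (iii) by constructing the global symmetry $Q$ explicitly on a Krylov-type $\ham$-invariant subspace. The first two parts are essentially bookkeeping; the construction in (ii) $\Rightarrow$ (iii) is where the real work lies, and is the step I expect to be the main obstacle.

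\textbf{(i) $\Leftrightarrow$ (ii).}
Write $A=\ham_{SS}$, $B=\ham_{\ssb}$, $D=\ham_{\sbb}$, so that $\ham_{\sbs}=B^{*}$ since $\ham$ is self-adjoint. For $|\lambda|$ large, $(\lambda-D)^{-1}=\sum_{j\ge0}\lambda^{-(j+1)}D^{j}$, hence $\ISR{S}{\ham}=A+\sum_{j\ge0}\lambda^{-(j+1)}C_{j}$ with $C_{j}:=BD^{j}B^{*}$. As $\lambda\mapsto[\ISR{S}{\ham},\symm]$ is a rational, hence holomorphic, matrix function on the connected set $\mathbb{C}\setminus\sigma(D)$, it vanishes there iff it vanishes for large $|\lambda|$ iff every Laurent coefficient does, i.e.\ iff $[A,\symm]=0$ and $[C_{j},\symm]=0$ for all $j\ge0$. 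On the other hand, expanding $\ham^{k}$ in blocks exhibits $(\ham^{k})_{SS}$ as a sum of words $A^{a_{0}}C_{j_{1}}A^{a_{1}}\cdots C_{j_{p}}A^{a_{p}}$ with $\sum_{i}a_{i}+\sum_{i}(j_{i}+2)=k$; a letter count then shows $(\ham^{m+2})_{SS}=C_{m}+\big(\text{a polynomial in }A,C_{0},\dots,C_{m-1}\big)$ for every $m\ge0$. Thus, if $\symm$ commutes with $A$ and all $C_{j}$ it commutes with every $(\ham^{k})_{SS}$; conversely, an induction on $m$ using this identity, with base case $[A,\symm]=0$ read off at $k=1$, yields $[C_{m},\symm]=0$ for all $m$. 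Finally (iii) $\Rightarrow$ (ii) is immediate: from $Q\ham^{k}=\ham^{k}Q$ and the block form $Q=\symm\oplus\bar Q$, comparing $SS$-blocks gives $\symm(\ham^{k})_{SS}=(\ham^{k})_{SS}\symm$.

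\textbf{(ii) $\Rightarrow$ (iii).}
Let $P_{S}\colon\mathbb{C}^{N}\to\mathbb{C}^{|S|}$ be the projection onto the $S$-coordinates and $P_{S}^{*}$ the corresponding inclusion, so $(\ham^{k})_{SS}=P_{S}\ham^{k}P_{S}^{*}$, and let $\mathcal{K}=\sum_{k\ge0}\ham^{k}P_{S}^{*}\mathbb{C}^{|S|}$ be the smallest $\ham$-invariant subspace of $\mathbb{C}^{N}$ containing the coordinate subspace $\mathbb{C}^{S}$. Since $\ham$ is self-adjoint, $\mathcal{K}^{\perp}$ is $\ham$-invariant as well, and $\mathcal{K}^{\perp}\subseteq\mathbb{C}^{\sbar}$ because $\mathbb{C}^{S}\subseteq\mathcal{K}$. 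On $\mathcal{K}$ I would define $\widetilde Q$ by $\widetilde Q\big(\sum_{k}\ham^{k}P_{S}^{*}v_{k}\big)=\sum_{k}\ham^{k}P_{S}^{*}(\symm v_{k})$. The identity driving everything is, for all $k,l$ and $v,w$,
\begin{equation*}
\big\langle \ham^{l}P_{S}^{*}w,\ \widetilde Q\,\ham^{k}P_{S}^{*}v\big\rangle=\big\langle w,\ (\ham^{l+k})_{SS}\,\symm\,v\big\rangle=\big\langle w,\ \symm\,(\ham^{l+k})_{SS}\,v\big\rangle=\big\langle \ham^{l}P_{S}^{*}(\symm^{*}w),\ \ham^{k}P_{S}^{*}v\big\rangle ,
\end{equation*}
the middle equality being exactly hypothesis (ii) and the outer ones using $\ham=\ham^{*}$. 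Taking $w$ arbitrary and a combination with $\sum_{k}\ham^{k}P_{S}^{*}v_{k}=0$ shows $\widetilde Q$ is well defined (hence linear); the same identity identifies $\widetilde Q^{*}$ on $\mathcal{K}$ as the analogous map built from $\symm^{*}$ (which is itself well defined because each $(\ham^{k})_{SS}$ is Hermitian and so commutes with $\symm^{*}$ too), whence $\widetilde Q\widetilde Q^{*}=\widetilde Q^{*}\widetilde Q$ follows from $\symm\symm^{*}=\symm^{*}\symm$, i.e.\ $\widetilde Q$ is normal; and $\widetilde Q\,\ham=\ham\,\widetilde Q$ on $\mathcal{K}$ by construction. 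Then I would set $Q:=\widetilde Q\oplus I_{\mathcal{K}^{\perp}}$: both summands are normal and act on $\ham$-invariant, mutually orthogonal subspaces, so $Q$ is normal and $[Q,\ham]=0$. It restricts to $\symm$ on $\mathbb{C}^{S}$ by definition; and for $v\in\mathbb{C}^{\sbar}$, writing $v=v_{0}+v_{1}$ with $v_{0}\in\mathcal{K}^{\perp}$ and $v_{1}\in\mathbb{C}^{\sbar}\cap\mathcal{K}$, one has $Qv_{0}=v_{0}\in\mathbb{C}^{\sbar}$ and $P_{S}(\widetilde Q v_{1})=\symm\,P_{S}v_{1}=0$, so $Qv\in\mathbb{C}^{\sbar}$. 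Hence $Q=\symm\oplus\bar Q$ for the $S\oplus\sbar$ partition, as required (and if one also invokes invertibility of $\symm$, the same construction run with $\symm^{-1}$ produces an inverse of $Q$).

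\textbf{Where the difficulty sits.}
The crux is this last implication: choosing the carrier space $\mathcal{K}$ so that hypothesis (ii) can be fed in, showing $\widetilde Q$ is genuinely well defined and normal on it --- the one place where self-adjointness of $\ham$ is essential, namely to move powers of $\ham$ past the inner product --- and then verifying that the innocuous-looking extension $Q=\widetilde Q\oplus I$ is in fact block-diagonal for the $S\oplus\sbar$ splitting, which hinges on the intertwining $P_{S}\widetilde Q=\symm P_{S}$ on $\mathcal{K}$. Everything else is routine calculation.
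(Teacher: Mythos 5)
Your proposal is correct, and for the one substantive implication, (ii) $\Rightarrow$ (iii), it takes a genuinely different route from the paper. The paper spectrally decomposes $\symm$, pads each eigenvector with zeros, forms a Krylov space per padded eigenvector, and uses condition (ii) \emph{together with the invertibility of} $\symm$ (the $\symmEval_m/\symmEval_l$ ratio argument) to show that Krylov spaces attached to distinct eigenvalues of $\symm$ are orthogonal; $Q$ is then an explicit spectral sum acting as $\symmEval_i$ on each $\widetilde{K}_i$ and as $0$ on the leftover orthogonal complement, with a separate argument that the complementary basis vectors vanish on $S$. You instead work with the single $\ham$-invariant space $\mathcal{K}$ generated by all of $\mathbb{C}^{S}$, define $\widetilde Q$ by intertwining, $\ham^{k}P_S^{*}v \mapsto \ham^{k}P_S^{*}\symm v$, and extract well-definedness, the adjoint (the map built from $\symm^{\dagger}$, legitimate since each $(\ham^{k})_{SS}$ is Hermitian), and normality directly from (ii), $\ham=\ham^{\dagger}$ and $\symm\symm^{\dagger}=\symm^{\dagger}\symm$, finally extending by the identity on $\mathcal{K}^{\perp}\subseteq\mathbb{C}^{\sbar}$; the block structure follows from $Q(\mathbb{C}^{S})\subseteq\mathbb{C}^{S}$ and the intertwining $P_S\widetilde Q=\symm P_S$ on $\mathcal{K}$, which is indeed immediate from the definition of $\widetilde Q$ plus (ii). This buys you two things: degenerate eigenvalues of $\symm$ need no bookkeeping (no completion of bases, no vanishing-on-$S$ lemma), and invertibility of $\symm$ is never used, so your argument for (ii) $\Rightarrow$ (iii) is slightly more general than the paper's; when $\symm$ is invertible your $Q$ and the paper's agree on $\mathcal{K}$ and differ only by identity-versus-zero on its complement, which is immaterial. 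You also prove (i) $\Leftrightarrow$ (ii) yourself via the Laurent expansion and the identity $(\ham^{m+2})_{SS}=\ham_{\ssb}\ham_{\sbb}^{m}\ham_{\sbs}+\bigl(\text{polynomial in } \ham_{SS},\ham_{\ssb}\ham_{\sbb}^{j}\ham_{\sbs},\,j<m\bigr)$, which is a correct induction, whereas the paper simply cites prior work for that equivalence.
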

\begin{proof}
	The equivalence of (i) and (ii) has already been proven in Ref.  \cite{Rontgen2021PRL126180601LatentSymmetryInducedDegeneracies}.
	Proving $(iii) \Rightarrow (ii)$ is trivial, since
	\begin{equation}
		\left[\ham,Q \right] = 0 \;\Rightarrow\; \left[\ham^{k},Q \right] = 0,
	\end{equation}
	and writing the $SS$-block of this commutator gives $\left[\left(\ham^{k} \right)_{SS},Q_{SS} \right] = \left[\left(\ham^{k} \right)_{SS},\symm \right] = 0$.
	
	We now prove the remaining step of $(ii) \Rightarrow (iii)$:
Since $\symm$ is normal and invertible, it can be spectrally decomposed as $\symm= \sum_{i=1}^{n} \sum_{j=1}^{d_i} \symmEval_{i} \evecUnPadded_{i,j} \evecUnPadded_{i,j}^{\dagger}$ with each of its nonzero eigenvalues $\symmEval_{i}$ corresponding to a set of $d_i$ orthonormal eigenvectors $\evecUnPadded_{i,j}$ with degeneracy index $j=1,\ldots{},d_{i}$, and with $\dagger$ denoting the conjugate transpose.

Let now $\evecU_{i,j}$ be the $N$-dimensional vector obtained from 
$\evecUnPadded_{i,j}$ by padding it with zeros such that $\left(\evecU_{i,j}\right)_{S} = \evecUnPadded_{i,j}$ and $\left(\evecU_{i,j}\right)_{\sbar} = 0$, with $N$ denoting the dimension of $\ham$.
We denote by $K_{i,j} = span\left(\evecU_{i,j},\ham \evecU_{i,j},\ldots{}, \ham^{N-1} \evecU_{i,j} \right)$ the Krylov subspace generated by $\evecU_{i,j}$.
As we now prove, when $l\ne m$, $K_{l,j} \perp K_{m,j'}$ for all $j,j'$.
Equivalently, the hermitian inner product $\langle \ham^{k_{1}}\evecU_{l,j}, \ham^{k_{2}} \evecU_{m,j'}\rangle=0$ for all $k_{1},k_{2}$:
Since $\ham = \ham^{\dagger}$ is self-adjoint, this inner product can be written as
\begin{align}
	\evecU_{l,j}^{\dagger} \, \ham^{k}\, \evecU_{m,j'} \overset{(a)}{=}& \evecUnPadded_{l,j}^{\dagger} \, \left(\ham^{k}\right)_{SS} \evecUnPadded_{m,j'} = \evecUnPadded_{l,j}^{\dagger} \, \symm^{-1} \left(\ham^{k}\right)_{SS} \symm\, \evecUnPadded_{m,j'} \\
	=&\frac{\symmEval_{m}}{\symmEval_{l}} \, \evecUnPadded_{l,j}^{\dagger} \left(\ham^{k} \right)_{SS} \evecUnPadded_{m,j'} \overset{(b)}{=} 0
\end{align}
with $k = k_{1} + k_{2}$ and where we have used that (a) both $\evecU_{l,j}$ and $\evecU_{m,j'}$ vanish on $\sbar$ and (b) that $\symmEval_{l} \ne \symmEval_{m}$.

We then proceed by defining the $\ham$-invariant subspaces $\widetilde{K}_{i} = \bigoplus_{j} K_{i,j}$.
From the above, it is clear that $\widetilde{K}_{l} \perp \widetilde{K}_{m}$ when $l\ne m$.
We now construct an orthonormal basis of each $\widetilde{K}_{i}$ as follows: As the first $d_{i}$ basis vectors, we choose the generating vectors $\evecU_{i,1},\ldots{},\evecU_{i,d_{i}}$ of the Krylov spaces $K_{i,1},\ldots{},K_{i,d_{i}}$.
These vectors are already pairwise orthonormal and are necessarily contained in $\widetilde{K}_{i}$.
Denoting the dimension of $\widetilde{K}_{i}$ by $\tilde{d}_{i}$, the remaining $\tilde{d}_{i} - d_{i} = r_{i} \ge 0$ basis vectors $\overline{\evecU}_{i,1},\ldots{},\overline{\evecU}_{i,r_{i}}$ can be shown to vanish on $S$: 
First, being a basis vector of $\widetilde{K}_{i}$, each $\overline{\evecU}_{i,j}$ must be orthogonal to all other basis vectors of this space, and in particular to $\evecU_{i,j'}$ for all $j'$.
Second, since $\widetilde{K}_{i} \perp \widetilde{K}_{i'}$ with $i \ne i'$, each $\overline{\evecU}_{i,j}$ must be orthogonal to each basis vector of $\widetilde{K}_{i'}$, and in particular to $\evecU_{i',j'}$ for all $j'$.
Thus, $\overline{\evecU}_{i,j}$ is orthogonal to $\evecU_{i',j'}$ for all $i',j'$.
Now, since the set $\{\evecU_{i',j'} \}$ forms an orthogonal basis for any vector that vanishes on $\sbar$, and since each element of this set vanishes on $\sbar$, it follows that each $\overline{\evecU}_{i,j}$ must vanish on $S$.

The above insights allow us to finally construct
\begin{equation} \label{eq:DefinitionOfQ}
Q = \sum_{i=1}^n \symmEval_{i} \Big[\sum_{j=1}^{d_{i}} \evecU_{i,j} \evecU_{i,j}^{\dagger} + \sum_{j=1}^{r_{i}} \overline{\evecU}_{i,j} \overline{\evecU}_{i,j}^{\dagger} \Big] \, .
\end{equation}
By construction, $Q$ is a normal matrix.
We now prove that $\left[\ham,Q\right] = 0$.
To this end, let $\mathbf{x}_{i} \in \widetilde{K}_{i}$.
It follows that $\ham \mathbf{x}_{i} \in \widetilde{K}_{i}$ as well, since $\widetilde{K}_{i}$ is by construction an $\ham$-invariant subspace.
Then, by \cref{eq:DefinitionOfQ}, all vectors in $\widetilde{K}_{i}$ are eigenvectors of $Q$ with identical eigenvalue $\symmEval_{i}$.
We thus have $Q \ham \mathbf{x}_{i} = \symmEval_{i} \ham \mathbf{x}_{i}$
and also $\ham Q \mathbf{x}_{i} = \ham \left(\symmEval_{i} \mathbf{x}_{i} \right)$ .
Let now $V$ denote the orthogonal complement of $\bigoplus_{i} \widetilde{K}_{i}$.
It is obvious that for $\mathbf{v}\in V$ we have $Q \mathbf{v} = 0$ and thus also $H Q \mathbf{v} = 0$.
Being the orthogonal complement of $\ham$-invariant subspaces, $V$ is also $\ham$-invariant, and we also get $\ham \mathbf{v} \in V$ implying
$Q \ham \mathbf{v} = 0$.
In summary,
\begin{equation}
	Q H \mathbf{x} = H Q \mathbf{x}
\end{equation}
\emph{for any} $\mathbf{x}$.
Thus, $\left[Q,\ham \right] = 0$ as claimed.

We proceed by showing that $Q_{SS} = \symm$.
To this end, we define the vector $\unitVec{i}$ as having a $1$ on component $i$ and with all other components vanishing.
Then, for $s,s' \in S$, the matrix element $Q_{s,s'} = \unitVec{s}^{\dagger} Q \unitVec{s'}$, and since each basis vector $\overline{\evecU}_{i,j}$ vanishes on $S$, we have
\begin{equation}
	Q_{s,s'} = \sum_{i=1}^{n} \sum_{j=1}^{d_{i}} \symmEval_{i} \unitVec{s}^{\dagger} \evecU_{i,j} \evecU_{i,j}^{\dagger} \, \unitVec{s'} = \sum_{i=1}^{n} \sum_{j=1}^{d_{i}} \symmEval_{i} \left( \evecU_{i,j} \evecU_{i,j}^{\dagger} \right)_{s,s'} = \sum_{i=1}^{n} \sum_{j=1}^{d_{i}} \symmEval_{i} \left( \evecUnPadded_{i,j} \evecUnPadded_{i,j}^{\dagger} \right)_{s,s'} = \symm_{s,s'}
\end{equation}
where in the last step we recognized the spectral decomposition of $\symm$.

To see that $Q_{\ssb} = 0$ and also $Q_{\sbs} = 0$, it suffices to note that, due to \cref{eq:DefinitionOfQ}, $\unitVec{s}^{\dagger} Q \unitVec{\bar{s}} = 0$ for any $s \in S$, $\bar{s} \in \sbar$.
\end{proof}


\end{document}